\titleformat{\section}{\centering\normalfont\scshape}{\thesection.}{.5em}{#1}
\titleformat{\subsection}[runin]{\normalfont\itshape}{\textnormal{\thesubsection.}}{.5em}{#1.}
\titleformat{\subsubsection}[runin]{\normalfont\itshape}{\thesubsubsection.}{.5em}{#1.}
\titlespacing{\section}{0em}{1em}{0.5em}
\titlespacing{\subsection}{0em}{.5em}{0.5em}
\newcommand{\ubar}[1]{\stackunder[1.2pt]{$#1$}{\rule{.9ex}{.075ex}}}
\newcommand{\ox}{{\overline x}}
\newcommand{\oy}{{\overline y}}
\newcommand{\ux}{{\ubar x}}
\newcommand{\uy}{{\ubar y}}
\newcommand{\om}{\omega}
\definecolor{gray}{gray}{0.5}
\newcommand{\cmt}[1]{}
\newcommand{\vertiii}[1]{{\left\vert\kern-0.25ex\left\vert\kern-0.25ex\left\vert #1 
    \right\vert\kern-0.25ex\right\vert\kern-0.25ex\right\vert}}
    \newcommand{\detail}[1]{}
	\newcommand{\drawQbg}{
		\fill (Q1) node [left] {$Q_1$} circle [radius=.02em];
		\fill (Q2) node [above left] {$Q_{2}$} circle [radius=\ptsize];
		\fill (Q3) node [right] {$Q_{3}$} circle [radius=\ptsize];
		\fill (Q4g) node [below right] {$Q_{4}$} circle [radius=\ptsize];
		\fill [color=\QbgFillColor,opacity=\QbgFillOpacity] (Q1) -- (Q2) -- (Q3) -- (Q4g) -- cycle;
		\draw [opacity=\QbgLineOpacity] (Q1) -- (Q2) -- (Q3);
		\draw [opacity=\QbgLineOpacity] (Q4g) -- (Q1);
		\if\QbgDrawCritSeg1
			\draw [style=\QbgCritSegStyle,opacity=\QbgCritSegOpacity] (Q3) -- (Q4g);
		\fi
	}
\def\lc{\lesssim}
\def\gc{\gtrsim}
\def\sH{\mathscr{H}}
\NewDocumentEnvironment{amatrix}{>{\SplitArgument{1}{|}}m}
 {\left(\makeamatrix#1}
 {\end{array}\right)}
\NewDocumentCommand{\makeamatrix}{mm}{%
  \IfNoValueTF{#2}
    {\begin{array}{@{}*{#1}{c}@{}}}
    {\begin{array}{@{}*{#1}{c}|*{#2}{c}@{}}}%
}
\def\widebreve{\mathpalette\wide@breve}
\def\wide@breve#1#2{\sbox\z@{$#1#2$}%
     \mathop{\vbox{\m@th\ialign{##\crcr
\kern0.08em\brevefill#1{0.8\wd\z@}\crcr\noalign{\nointerlineskip}%
                    $\hss#1#2\hss$\crcr}}}\limits}
\def\brevefill#1#2{$\m@th\sbox\tw@{$#1($}%
  \hss\resizebox{#2}{\wd\tw@}{\rotatebox[origin=c]{90}{\upshape(}}\hss$}
\def\eps{\varepsilon}
\def\bbone{{\mathbbm 1}}
\newcommand{\wt}{\widetilde}
\newcommand{\Be}{\begin{equation}}
\newcommand{\Ee}{\end{equation}}
\newcommand{\Bm}{\begin{multline}}
\newcommand{\Em}{\end{multline}}
\def\intslash{\rlap{\kern  .32em $\mspace {.5mu}\backslash$ }\int}
\def\qsl{{\rlap{\kern  .32em $\mspace {.5mu}\backslash$ }\int_{Q_x}}}
\def\lc{\lesssim}
\def\gc{\gtrsim}
\def\emph#1{{\it #1 }}
\def\Ga{\Gamma}
\def\ga{\gamma}
\def\rank{{\mathrm{rank}}}
\def\supp{{\mathrm{supp}}}
\def\range{{\mathrm{range}}}
\def\inn#1#2{\langle#1,#2\rangle}
\def\noi{\noindent}
\def\ga{\gamma}             \def\Ga{\Gamma}
\def\eps{\varepsilon}
\def\la{\lambda}             \def\La{\Lambda}
\def\om{\omega}              \def\Om{\Omega}
\def\vth{\vartheta}
\def\fM{{\mathfrak {M}}}
\def\fg{{\mathfrak {g}}}
\def\bbH{{\mathbb {H}}}
\def\bbR{{\mathbb {R}}}
\def\bbV{{\mathbb {V}}}
\def\cA{{\mathcal {A}}}
\def\cE{{\mathcal {E}}}
\def\cF{{\mathcal {F}}}
\def\cJ{{\mathcal {J}}}
\def\cM{{\mathcal {M}}}
\def\cV{{\mathcal {V}}}
\def\cW{{\mathcal {W}}}
\def\emph#1{{\it #1}}
\def\textbf#1{{\bf #1}}
\def\beq{\begin{equation}}
\def\endeq{\end{equation}}
\def\bs{\begin{split}}
\def\es{\end{split}}
\theoremstyle{plain}
\newtheorem{thm}{Theorem}[section]
\newtheorem{prop}[thm]{Proposition}
\newtheorem*{thm*}{Theorem}
\newtheorem*{conj*}{Conjecture}
\newtheorem*{openproblem*}{Open Problem}
\theoremstyle{remark}
\newtheorem*{remarksa}{Remarks}
\numberwithin{equation}{section}
\definecolor{jrcol}{rgb}{0,0,1.} 
\definecolor{ascol}{rgb}{1.,0,0} 
\def\R{\mathbb{R}}
\begin{document}
\title[On perturbed   Nevo--Thangavelu  means]{Failure of stability of a maximal operator bound for perturbed Nevo--Thangavelu means}

\author[Jaehyeon Ryu and Andreas Seeger]{Jaehyeon Ryu \ \ \ \ \ Andreas Seeger} 

\address{Jaehyeon Ryu: Department of Mathematics, Ewha Womans University, Seoul 03760, Korea}
\email{jhryu67@ewha.ac.kr}

\address{Andreas Seeger: Department of Mathematics, University of Wisconsin, 480 Lincoln Drive, Madison, WI, 53706, USA.}
\email{seeger@math.wisc.edu}

\begin{thanks}
    {J.R.  was supported in part by the Ewha Womans University Research Grant
of 2025.  
A.S.  was  supported in part by NSF Grant DMS-2348797.} 
\end{thanks}


\begin{abstract}
Let $G$ be  a two-step nilpotent Lie group,  identified via the exponential map with the Lie-algebra $\mathfrak g=\mathfrak g_1\oplus\mathfrak g_2$, where $[\mathfrak g,\mathfrak g]\subset \mathfrak g_2$.
We consider    maximal functions associated to spheres in a $d$-dimensional linear subspace  $H$, dilated by the automorphic dilations. $L^p$ boundedness results for the case where $H=\mathfrak g_1$ are  well understood. Here   we consider the case of  a tilted hyperplane  $H\neq \mathfrak g_1$ which is not  invariant under the automorphic dilations. In the case of M\'etivier groups it is known that the $L^p$-boundedness results are stable under a small linear tilt. We show that this  is generally not the case for other two-step   groups, and provide  new necessary conditions for $L^p$ boundedness. We prove these results in a more general setting with  tilted versions of  submanifolds of $\mathfrak g_1$. 
\end{abstract}

\maketitle

\section{Introduction}\label{sec:intro} 
Let $G$ be a finite dimensional two-step nilpotent Lie group, which via the exponential map we identify with its Lie algebra $\fg$.
We fix  the direct sum decomposition $\fg=\fg_1+\fg_2$ where $\dim\fg_1=d$, $\dim \fg_2=m$,  $[\fg,\fg]\subset \fg_2$  and $\fg_2$ is contained in the center.  By the Baker-Campbell-Hausdorff formula and the two-step  assumption the group law is given by
\[(X,U)\cdot (Y,V)= (X+Y, U+V +\tfrac 12 [X,Y])\] where the commutator $[X,Y]$ belongs to $\fg_2$.  On $G$ a natural group of automorphic dilations is given for $t>0$ by $\delta_t: (X,U)\mapsto (tX, t^2U)$. For every linear functional  $\vth\in \fg_2^*$, and for $X\in \fg_1$  the functional $\cJ_X^\vth: Y\to \frac{1}{2} \vth([X,Y])$ belongs to $\fg_1^*$ and depends linearly on $X$ and $\vth$. The linear map $\cJ^\vth: \fg_1\to \fg_1^*$ given by $\cJ^\vth[X]=\cJ^\vth_X$ depends linearly on $\vth$ and can be identified with the bilinear form $(X,Y)\mapsto \frac 12 \vth([X,Y])$. 

Let $H$ be a $d$-dimensional plane which is transversal to $\fg_2$, i.e. $H$ is given as \[H=\{(X, \La(X)),\, X\in \fg_1\}\] and $\Lambda:\fg_1\to \fg_2$  is linear. We assume that a standard scalar product is defined on $\fg_1$ and 
define  a measure $\varsigma^\La $ by 
\[\inn{f}{\varsigma^\La} = \frac 1{\varsigma(S^{d-1}) }\int_{S^{d-1}}  f(X,\La(X) )  d\varsigma   \]
where $\varsigma$ denotes surface  measure on the unit sphere $S^{d-1}=\{X:|X|=1\}$.   
Define the dilate $\varsigma^\Lambda _t$ of $\varsigma^\La$ by $\inn{f}{\varsigma^\La_t} = \int f(tX,t^2U)d\varsigma^\La$. For Schwartz functions $f\in \mathcal S(\fg)$  we are then interested in the maximal function generated by the  non-commutative convolutions $f*\varsigma_t$. 
That is, we ask for $L^p$-boundedness properties of  the maximal operator $\cM^\La$, defined by  
\[  \cM^\La f =\sup_{t>0} |f*\varsigma^\La_t| . \]

This problem was first proposed by Nevo and Thangavelu in \cite{NevoThangavelu1997}, for the 
Heisenberg groups, with $\La=0$ and $\Sigma$ the sphere $\{|X|=1\}$ in $\fg_1$. 
For $d\ge 3$,  and $\La=0$ an optimal boundedness result was proved  by M\"uller and the second author in \cite{MuellerSeeger2004} for M\'etivier groups (i.e. in the case that  for every nonzero $\vth\in \fg_2^*$ the linear map $\cJ^\vth:\fg_1\to \fg_1^*$ is an isomorphism); in this case the maximal operator is  bounded on $L^p(G)$ if and only if  $p>\frac{d}{d-1} $. Independently, this result  was also obtained for the Heisenberg groups $ \bbH_n$ ($n\ge 2$) 
 by Narayanan and Thangavelu \cite{NarayananThangavelu2004},  using a different approach.  More recently we showed in \cite{RyuSeeger} that for $\La=0$ the maximal operator $\cM^0$ 
 is $L^p$ bounded for $p>\frac{d}{d-1}$ on all two-step groups with $d>2$.   The $L^p$-boundedness in the case $d=2$, in particular the case of the Heisenberg group $\bbH_1$,  remains open (see however positive results for the case of Heisenberg-radial functions in \cite{BeltranGuoHickmanSeeger}, \cite{LeeLee}). 

The question about $L^p$-boundedness of the perturbed maximal operators $\cM^\La$ was first raised by M\"uller and the second author in \cite{MuellerSeeger2004}. 
Satisfactory results for the case of M\'etivier groups were obtained using an $L^2$ local smoothing estimate 
in \cite{RoosSeegerSrivastava-imrn} (see also related results in \cite{KimJoonil}); for another approach on the Heisenberg groups based on fixed time $L^p$-regularity results via decoupling see \cite{PramanikSeeger, AndersonCladekPramanikSeeger}. We remark that the condition $p>\frac{d}{d-1}$ is always necessary for $L^p$-boundedness. 
This follows from a variant of Stein's example \cite{SteinPNAS1976} (as
 was already noted in \cite{LiuYan, RyuSeeger}). 
It turns out that on the  M\'etivier groups the above results for $\La= 0$  remain true under small perturbations \cite{RoosSeegerSrivastava-imrn}. That is,   for small $\|\La\|$ the operator $\cM^\La$,  is still $L^p$ bounded for $p>\frac{d}{d-1}$ when $d>2$. The purpose of this paper is to investigate this stability phenomenon for other two-step groups,  outside the M\'etivier class. One might have expected
that 
$L^p$ boundedness of $\cM^\La$ 
with small but nonzero $\La$  still holds for all two-step   groups  when $p>\frac{d}{d-1}$, but we show in this paper that this is not the case. 

We investigate this phenomenon in a more general setting. 
In what follows we fix a nontrivial, nonnegative  $\chi\in C^\infty_c(\fg_1)$ 
and surface measure $d\sigma$ on a $k$-dimensional $C^1$-submanifold $\Sigma$ of $\fg_1$.
Let $d\mu= \chi d\sigma$.  For $t>0$ we define the measure $\mu_t$ by 
\[ \inn{f}{\mu_t} =\int f(tX, t^2 \Lambda (X) ) \chi(X) d\sigma(X).\] We are then interested in lower bounds for the maximal function $\fM f= \sup_{t>0} |f*\mu_t|$;
these follow from lower bounds for a localized operator. 
Let $I\subset (0,\infty)$ a compact interval of positive length and define, for continuous functions $f$ with compact support, the maximal operator $\fM_I\equiv \fM_I^\La$ by 
\Be \label{eq:defofmaxop}  \fM_I f(X,U)= \sup_{t\in I} |f*\mu_t(X,U) |. \Ee 

For $\omega^\circ\in \Sigma$ let 
$ (T_{\omega^\circ}\Sigma)^0=\{ \phi\in \fg_1^*: \,\phi(v)=0 \text{ for all }  v\in T_{\omega^\circ} \Sigma\}, $  
the annihilator of the tangent space $T_{\om^\circ}\Sigma.$
For $\vartheta\in \fg_2^*$ consider 
\Be\label{Vlatheta}  \bbV_{\La,\vth} = \mathrm{range} (\cJ^\vth) + \bbR (\vartheta\circ\La) \,.
\Ee 
which is a linear subspace of $\fg_1^*$. 
Our unboundedness results rely on the following hypothesis, for $1\le r\le k$.

\medskip 
\noi{\it Hypothesis $\sH(r)$.} {\it
There exist $\vartheta\in \fg_2^*$ and  $\omega^\circ\in \Sigma$ satisfying 
 $\chi(\om^\circ)\neq 0$ such that
\begin{itemize}
\item $\vartheta\circ\La\neq 0$.

\item $\bbV_{\La,\theta} \cap (T_{\omega^\circ} \Sigma)^0=\{0\}$

\item $\dim (\bbV_{\La,\vartheta}) =r$. 

\end{itemize} 
}

Note that the second condition implies that there exists a scalar product on $\fg_1^*$ with respect to which  $\bbV_{\la,\theta}$ is orthogonal to  $(T_{\omega^\circ} \Sigma)^0$, and  the condition means that $\bbV_{\la,\theta}$ is a subspace of 
$((T_{\omega^\circ} \Sigma)^0)^\perp$ which is identified with $T^*_{\om^\circ} \Sigma$. 
\medskip

\begin{thm}\label{thm:main} Assume  $d\ge 2$,  $\La\neq 0$, $1\le r\le k\le d$  and   that Hypothesis $\sH(r)$ holds. 
    \begin{itemize}[topsep = 2pt, itemsep = 2pt]
        \item [(i)] If  $2\le r\le k$ and 
     $\fM_I$ extends to a bounded operator on $L^p(G)$ then  $p>\frac{r+1}r$.
        \item [(ii)] If $r=1$ and  $\fM_I$ extends to a bounded operator on $L^p(G)$ then $p = \infty$.
    \end{itemize}
\end{thm}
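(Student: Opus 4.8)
The plan is to derive both necessary conditions from explicit extremizers, after rewriting the group convolution in a form where the tilt is visible. I first fix $\vth$ and $\om^\circ$ as in $\sH(r)$, identify $\bbV_{\La,\vth}$ with an $r$-dimensional subspace $\mathbf V\subset\fg_1$ (via the scalar product), which by hypothesis lies in $T_{\om^\circ}\Sigma$, and choose coordinates adapted to $\mathbf V$ and $\om^\circ$ with $\vth$ the first coordinate on $\fg_2$. Since $(Y,V)^{-1}=(-Y,-V)$, one has
\[
f*\mu_t(X,U)=\int_\Sigma f\bigl(X-tY,\;U-t^2\La(Y)-\tfrac12 t[X,Y]\bigr)\chi(Y)\,d\sigma(Y),
\]
so the $\vth$-entry of the $\fg_2$-shift equals $t^2(\vth\circ\La)(Y)+t\,\cJ^\vth_X(Y)$. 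Because $\vth\circ\La$ and $\cJ^\vth_X$ both lie in $\bbV_{\La,\vth}$, this shift is, to leading order along $\Sigma$, a function only of the $r$ tangential coordinates dual to $\mathbf V$. This confinement of the tilt to the $r$ directions of $\bbV_{\La,\vth}$ is the mechanism behind both statements, and the qualitative difference between $r=1$ and $r\ge2$ is exactly whether those directions and the $\vth$-axis force a flat or a curved model.

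For part (ii) I note that $r=1$ forces $\cJ^\vth=0$, since its range sits in the line $\bbV_{\La,\vth}$ while an alternating form has even rank; thus the $\vth$-shift is simply $t^2(\vth\circ\La)(Y)$. Now the spatial displacement along $\mathbf V$ and the $\vth$-displacement are governed by the same scalar $(\vth\circ\La)(Y)$, namely they equal $(\vth\circ\La)(Y)\,(t,t^2)$ up to constants. Testing $\fM_I$ on $F\bigl((\vth\circ\La)(X),\vth(U)\bigr)$ times fixed bumps in the complementary variables, and integrating out the transverse $\Sigma$-coordinates against a smooth weight, therefore yields an average of $F$ over a straight segment in the plane $\{((\vth\circ\La)(X),\vth(U))\}$ in the direction $(1,t)$; curvature of $\Sigma$ only reweights the segment. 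As $t$ runs over $I$ these slopes sweep a nondegenerate sector, and the maximal operator over segment averages with directions in a sector of positive measure is unbounded on $L^p(\bbR^2)$ for every $p<\infty$ (a Besicovitch--Nikodym construction). Transplanting the extremizers to $G$ shows $\fM_I$ is unbounded for all finite $p$.

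For part (i) I use a focusing (Stein-type) extremizer, and the essential task is to exhibit the curvature. When $r\ge2$ the $r$-dimensional spatial displacement along $\mathbf V$ and the scalar $\vth$-displacement are no longer collinear, and combining them with the transverse curvature of $\Sigma$ and the $t$-dilation realizes, after integrating out the spectator directions, an $(r+1)$-dimensional dilation-invariant spherical (equivalently parabolic) average on the $r$ tilt-coordinates together with the dilation variable. Concretely I would take $f$ to be essentially the indicator of a box that is $\delta$-thin in the $\mathbf V$-directions and suitably thin in $\vth$, of the parabolic width in the transverse-to-$\mathbf V$ directions, and of unit size elsewhere, arrange that $\om^\circ$-centred caps contribute, and check that for $(X,U)$ in a set of controlled measure some $t\in I$ makes $f*\mu_t(X,U)$ comparable to the measure of an $r$-dimensional cap. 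Comparing $\|\fM_I f\|_p$ and $\|f\|_p$ and sending $\delta\to0$ forces $p\ge\frac{r+1}r$; if the relevant curvature degenerates the same construction only produces unboundedness for a larger range of $p$, so the exponent is never weakened.

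To upgrade $p\ge\frac{r+1}r$ to the strict inequality I would, in the nondegenerate case, replace the single box by the critical-homogeneity example with a logarithmic gain used in the endpoint analysis of the spherical maximal function, localized to $t\in I$, whose maximal function just fails to lie in $L^{(r+1)/r}$. The principal obstacle is entirely in part (i): one must pin down which combination of the transverse second fundamental form of $\Sigma$, the alternating form $\cJ^\vth$, and the $t$-versus-$t^2$ scaling actually produces the $(r+1)$-dimensional spherical scaling, and then control the Jacobians that appear when the average is localized to the $r$ tilt-directions — in particular verifying that the spectator (transverse, radial, and remaining $\fg_2$) directions contribute only a bounded smoothing that cancels between numerator and denominator, and that degeneracies make the example more, not less, singular. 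This uniform curvature bookkeeping, rather than the soft reductions, is where I expect the real work to lie.
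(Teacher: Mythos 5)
Your part (ii) is essentially the paper's argument: reduce to one central coordinate (your ``fixed bumps in the complementary variables'' is the paper's tensor-function reduction), note that $r=1$ forces $\cJ^\vth=0$ because an alternating form has even rank, observe that the displacement in the plane spanned by the $\la$-direction and the $\vth$-coordinate is $(\la^\intercal\ubar y)\,(t,t^2)$, and run a Nikodym-set construction with directions restricted to the sector $\{\arctan t:\,t\in I\}$. Two points you gloss over but which are needed and fixable: the weighted segment average must not charge its basepoint, so one must restrict the measure to $\{\la^\intercal\ubar y\neq 0\}$ (it has positive mass there because $\sH(1)$ puts $\la$ in $T_{\om^\circ}\Sigma$ and $\chi(\om^\circ)\neq 0$); and the unboundedness of the segment-maximal operator with directions confined to a small sector is exactly what has to be proved --- the paper does it by pigeonholing de Guzm\'an's Nikodym set into $4N$ angular sectors and rotating the one that retains positive measure.

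Part (i) contains a genuine gap, in both the mechanism and the numerology. First, there is no curvature to ``exhibit'': $\Sigma$ is only $C^1$, Hypothesis $\sH(r)$ is a \emph{tangency} condition, and the correct example uses flatness, not a spherical average. Writing $S^\intercal=\begin{pmatrix} J^\intercal & \la\end{pmatrix}$ and letting $\Pi$ be the projection onto $\range(S^\intercal)\subset T_{\om^\circ}\Sigma$, the identity
\begin{equation*}
x_{d+1}-t^2\la^\intercal\om-t\,\ubar x^\intercal J\om=(x_{d+1}-t\,\la^\intercal\ubar x)+(t\la-J\ubar x)^\intercal\,\Pi(\ubar x-t\om)
\end{equation*}
shows that after the choice $t_x=x_{d+1}/(\la^\intercal\ubar x)$ the central displacement is \emph{linearly slaved} to $\Pi(\ubar x-t\om)$; in the $r+1$ active coordinates the averaging measure lives on a flat affine $r$-plane whose offset is swept by $t$. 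There is no $(r{+}1)$-dimensional spherical or parabolic structure, so the ``curvature bookkeeping'' you anticipate as the main work does not exist. Second, your concrete box is quantitatively wrong: giving it parabolic width in the $d-r$ transverse directions forces the contributing cap to have tangential extent $\sqrt\delta$ in the $k-r$ parameters transverse to $\range(S^\intercal)$, so its measure is $\approx\delta^{r+(k-r)/2}$ rather than $\delta^{r}$, while the good set of $x$ shrinks to measure $\approx\delta^{(d-r)/2}$ and $\|f\|_p\approx\delta^{(r+1+(d-r)/2)/p}$; the ratio then blows up only for $p<\tfrac{r+1}{\,r+(k-r)/2\,}$, strictly weaker than the claimed $\tfrac{r+1}{r}$ whenever $r<k$ (which is the case of interest, e.g.\ $k=d-1$, $r\le d-2$). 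The fix is to take the test set of \emph{unit} size transverse to $\range(S^\intercal)$ and $\approx\delta$ only in the $r$ directions of $\range(S^\intercal)$ and in the central variable (measure $\approx\delta^{r+1}$), so that the full cap --- free in the $k-r$ tangential parameters guaranteed by $\sH(r)$ --- contributes mass $\gtrsim\delta^r$; with that, your dyadic log-stacking for the endpoint $p=\tfrac{r+1}{r}$ goes through exactly as in the paper.
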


\begin{remarksa} \hfill

(a) Because of the skew-symmetry of the bilinear form $(X,Y)\mapsto \vth([X,Y])$ the dimension of range $\cJ^\vth$ is always even. That is, in part (ii) of Theorem \ref{thm:main} where $r=1\le k\le d$ we get unboundedness for all $p<\infty$ if there exists $\vth\in \fg_2^*$ such that $\vth\circ \La\neq 0$ and $\cJ^\vth=0$. For $m=1$ this corresponds to the commutative Euclidean case. The main tool used in the proof of this unboundedness result 
is the (complement of the) Nikodym set in the plane  \cite{Nikodym, Guzman1}, and we exploit  that in some situations the nonisotropic dilations in some compact $t$-interval have an effect which is similar to rotations.

(b) Theorem \ref{thm:main} applies with $k=d$ if $\La\neq 0$ and $d\mu= \chi dX$ where $dX$  is Lebesgue measure on $\fg_1$.

(c) Theorem \ref{thm:main} applies with $k=d-1$ if $d\mu=\chi d\sigma$ where $\sigma$ is surface measure   on the smooth boundary of a convex domain $\Omega$ in $\fg_1$.  
In this case every linear hyperplane is the cotangent space to $\partial\Om$ at some point $\omega^\circ$, i.e.,
$\bbV_{\La,\theta}$ is a linear subspace of one of those cotangent spaces. We note that $\fM_I$  fails to be bounded for $p\le \frac{d}{d-1}$ as can be seen by a variant of the familiar example by Stein \cite{SteinPNAS1976}. The new examples proving the necessity of the condition $p>\frac{r+1}{r}$ in Theorem \ref{thm:main}  are thus relevant for the cases $2\le r\le d-2$. 

(d) In case (c) above, under the additional assumption that $0$ is in the interior of $\Omega$ and that  $\partial \Om$ has nonvanishing curvature everywhere, we have  a tilted version of the setup in our previous paper \cite{RyuSeeger}.  Theorem \ref{thm:main} shows that the hypothesis $\La=0$ in the general result in \cite{RyuSeeger} cannot be dropped. 
In a  subsequent paper  we intend to prove  
 satisfactory upper bounds for the cases $r\le d-2$, under the nonvanishing Gaussian curvature assumption on $\partial\Om$.

 (e) Again for the case (c), we conjecture that stability holds in the cases $r=d-1$ and $r=d$; this appears to be a difficult problem. Moreover, even in the M\'etivier case (where $r=d$ since the $\cJ^\vartheta$ are invertible for $\vartheta\neq 0$) the $L^p$ boundedness for $p>\frac{d}{d-1}$ was established in \cite{RoosSeegerSrivastava-imrn} only for sufficiently small $\|\La\|$, and it would be interesting to settle the general case. 

   \end{remarksa}

\noi{\it Outline.} In \S\ref{sec:reduction}  we discuss coordinates on the group and show that it suffices to prove the lower bounds in the case $m=1$. In \S\ref{sec:large-r} we show unboundedness for $p\le \frac{r+1}{r}$ and in \S\ref{sec:r=1} we treat the special case $r=1$. 

\medskip

\noi{\it Acknowledgements.} We thank the referee for a careful reading of the paper and useful suggestions. 
    {J.R.  was supported in part by the Ewha Womans University Research Grant
of 2025. 
A.S.  was  supported in part by NSF Grant 2348797.

\section{Preliminary reductions}\label{sec:reduction}

\subsection{Coordinates on $G$} 
\label{sec:coordinates} 
Choosing coordinates on $\fg$ we may identify   $\fg_1$ with $\bbR^d$ and $\fg_2$ with $\bbR^m$. We denote coordinates $x$ on $G$ by $x=(\ubar x, \overline x)\in \bbR^d\times \bbR^m$; then the group law becomes 
\[
(\ux,\ox) \cdot (\uy,\oy) = (\ux+\uy, \ox+\oy + \ux^\intercal \vec J\uy),
\]
where $\ux^\intercal \vec J\uy = (\ux^\intercal J_1 \uy,\dots \ux^\intercal J_m \uy)$ and $J_1,\dots, J_m$ are $d\times d$ skew-symmetric matrices. Our convolution operator is then written as 
\Be\label{eq:Atdef}
A f(x,t) := f\ast \mu_t(x) = \int f(\ux - t\om, \ox - t\ux^\intercal \vec J \om - t^2 \La \om) d\mu(\om).
\Ee
The linear map $\La:\bbR^d\to \bbR^m$ is given by $\La \uy= \sum_{i=1}^m (\la_i^\intercal \uy)  e_{d+i}$  where $ e_{d+1},\dots,e_{d+m}$ is the standard basis of $\bbR^m$, and $\la_i\in \bbR^d$, $i=1,\dots, m$. 
For $1\le i\le m$, let $S_i$ be the  $(d+1)\times d$ matrix given by
\begin{align} 
\label{def:Si}
    S_i = \begin{pmatrix}
    J_i \\ \lambda_i^\intercal
\end{pmatrix}.
\end{align} If $\vth\in \fg_2^*$ is given by $\vth(e_{d+i} )=\theta_i$, $i=1,\dots, m$, then the dimension of the space $\bbV_{\La,\vth}$  in Theorem \ref{thm:main} is equal to 
\begin{align}\label{def:redef=rtheta}
r(\vth) = \rank \big(\sum_{i=1}^m\theta_iS_i\big).
\end{align}

\subsection{Scaling} \label{sec:parabolicscaling} A calculation shows that $f*\mu_{st} (x)= [f(\delta_s\cdot)]*\mu_t (\delta_{1/s} x) $ and thus $\fM_{sI} f(x)=\fM_I[f(\delta_s\cdot)] (\delta_{1/s}x)$ which shows $\|\fM_{sI}\|_{L^p\to L^p} = \|\fM_I \|_{L^p\to L^p}$.  Since for $I_1\subset I_2$ we have $\|\fM_{I_2}\|_{L^p\to L^p} \ge \|\fM_{I_1} \|_{L^p\to L^p}$ we may assume,  after a finite decomposition of a $t$-interval and scaling,    that $I\subseteq [1-\eps, 1+\eps] $ for some small  $\eps>0$.

\subsection{Reduction to the case $m=1$} 
\label{sec:reductiontom=1}
We show that in order to prove the unboundedness results in  Theorem \ref{thm:main} it suffices to do this for the case $m=1$. 

Let $G$ be a general two-step nilpotent group of dimensions $d+m$, $m>1$, which we have identified with $\bbR^d\times \bbR^m$ as above. By the definition of $r$ in Hypothesis $\sH(r)$, there exists $\theta = (\theta_1,\dots, \theta_m)\in \mathbb S^{m-1}\setminus \{0\}$ 
such that $\rank(\sum_{i=1}^m \theta_i S_i) = r$   and $\sum_{i=1}^m \theta_i \lambda_i^\intercal\neq 0$. Choose unit vectors $(b^i)_{2\le i\le m}$ in $\bbR^m$ such that $\theta$, $b^2, \dots, b^m$ are mutually orthogonal. Let $R_\theta$ be the $m\times m$ rotation matrix with columns $\theta, b^2,\dots, b^m$. 
Then the averaging operator $A_t$ is expressed as
\Be \label{Atrot}
A_t f(\ux, \ox) = \int f\big(\ux-t\om, R_\theta \big(R_\theta ^\intercal \ox - t R_\theta ^\intercal \big((\ux,t)^\intercal \vec S \om\big)\big)\big) d\mu(\om)
\Ee  with $(\ux,t)^\intercal \vec S \om=\sum_{i=1}^m (\ux^\intercal J_i\om+t\la_i^\intercal \om) e_{d+i}$. 
Note that \[\inn{R_\theta ^\intercal \big((\ux,t)^\intercal \vec S \om\big)}{e_{d+1}} = (\ux,t)^\intercal (\sum_{i=1}^m \theta_i S_i)\om. \]  
Thus, after a  conjugation with a rotation in $\bbR^m$,  we may assume that \[\text{ $R_\theta  = I_m$,  $\rank(S_1) = r$ and $\la_1^\intercal \neq 0$.}\]

Let $I\subset (0,\infty)$ be a compact subinterval. We now consider a class of compactly supported smooth functions $f=\widetilde f\otimes h$ given by \[f(\uy, \oy_1, \oy') = \wt f(\uy, \oy_1) h(\oy'),\] with  $\wt f\in C_c^\infty(\R^{d+1})$, $h\in C_c^\infty(\R^{m-1})$. For $f$ in this class we have
\[ A_t f(x)= \int \widetilde f(\ubar x-t\om, x_{d+1}-t(x,t)^\intercal S_1\om) h(\sum_{i=2}^m (\ox_i - t (\ux, t)^\intercal S_i \om) e_{d+i})  d\mu(\om)\] with $t(x,t)^\intercal S_1\om) =
t \ux^\intercal J_1\om +t^2\la_1^\intercal \om$. 
Define 
\Be\label{tildeAt} \wt A_t \wt f(\ubar x,x_{d+1})= \int \wt f(\ubar x-t\om, x_{d+1} -t(x,t)^\intercal S_1\om)
d\mu(\om) .\Ee

Let $\widetilde B$ be a ball in  $\bbR^{d+1}$ and $B'$ be a ball in  $\bbR^{m-1}$. Choose $h\in C^\infty_c(\bbR^{m-1})$ 
so that $h\equiv 1$ on a  large compact set, specifically
\[h(\sum_{i=2}^m (\ox_i - t (\ux, t)^\intercal S_i \om) e_{d+i}) = 1\] for every $\om \in \supp(\mu)$, $t\in I$, $x\in \widetilde B\times B'$.
Then 
\begin{align}\label{eq:sliceineq}
    \big\|\bbone_{\widetilde B\times B'}\sup_{t\in I}\big|A_t f\big|\big\|_{L^p(\bbR^{d+m})} = |B'|^{1/p} \big\|\bbone _{\wt B}\sup_{t\in I}\big|\wt A_t \wt f\big|\big\|_{L^p(\bbR^{d+1} )},
\end{align}

Denote by  $\widetilde G$ denote the $d+1$-dimensional two-step group with group law 
$(\ubar x, x_{d+1})\cdot (\ubar y, y_{d+1}) =(\ubar x+\ubar y, x_{d+1}+y_{d+1}+\ubar x^\intercal J_1\ubar y)$.
From \eqref{eq:sliceineq}  we conclude that the desired unboundedness  on   $G$ would follow once on  $\wt G$, the local maximal operator $g\to \bbone_{\wt B}\sup_{t\in I}|\wt A_t g|$ is shown to be unbounded on $L^p(\wt G)$; here  $\wt B\subset \wt G$ is a suitable ball, and   $I\subset (0,\infty)$ is a compact interval. Below we will consider  $\sup_{t\in I} |A_t g|$ for compactly supported functions $g$ so that the maximal function is then  supported on compact sets; thus the characteristic function of the ball  $\wt B$ will can then be dropped in  the definition of the local maximal function.

\section{Unboundedness for $p\le \frac{r+1}{r}$} \label{sec:large-r}
In what follows we prove that our (local) maximal operator is not bounded on $L^p$ if $p\le \frac{r+1}{r}$, showing part (i) of Theorem \ref{thm:main}, for $r\ge 2$.  By the reduction in \S\ref{sec:reductiontom=1} we  may assume $m=1$, $\La(\ubar x)=\la^\intercal \ubar x$ where $\la$ is a nonzero vector in $\bbR^d$. We thus write $J=J_1$ and $\la=\la_1$ in \eqref{tildeAt} and consider the maximal function  $\fM_I g=\sup_{t\in I}|A_t g|$ where
$I=[1-\eps_0, 1+\eps_0]$ with $\eps_0<\frac 12$, 
\Be \label{eq:Atg} A_t g(\ubar x,x_{d+1})= \int g(\ubar x-t\om, x_{d+1} -t \ux^\intercal J\om -t^2\la^\intercal \om) 
\chi(\om) 
d\sigma(\om) .\Ee  
Moreover $S^\intercal=\begin{pmatrix} J^\intercal &\la\end{pmatrix}$ and $r$ is the rank of $S^\intercal$. We have $0\neq \la\in \range (S^\intercal)$. Let $\Pi:\bbR^d\to \bbR^d$ be the orthogonal projection to the range of $S^\intercal$.  

By assumption, there exists  $\omega^\circ\in \Sigma$ such that  $\chi(\om^\circ)\neq 0$,  and $\range(S^\intercal)$ is contained in the tangent space $T_{\omega^\circ} \Sigma$. Let $\widetilde \Sigma$ be a neighborhood (in $\Sigma$) of $\omega^\circ$ such that $|\chi(\om)|\ge c>0$ for $\om\in \widetilde \Sigma$. By choosing $\widetilde \Sigma$ sufficiently small we may assume that there is a parametrization of $\widetilde \Sigma$ 
with $u\in \bbR^{k}$ close to the origin and $\Gamma(0)=\omega^\circ$ such that $\{\frac{\partial \Gamma}{\partial u_i} (0)\}_{i=1}^{k}$ is an orthogonal basis of $T_{\omega^\circ}\Sigma$, and such that $\frac{\partial \Gamma}{\partial u_1}(0) = \frac{\lambda}{|\lambda|}$ 
and  
$\{\frac{\partial \Gamma}{\partial u_i}(0)\}_{i=1}^r$ is a basis of $\range (S^\intercal)$.
We split the parameters as $u=(u',u'')\in \bbR^r\times \bbR^{k-r}$; in the case $k=r$ the variable $u''$ is not present (which requires a slight notational modification in what follows).

We use the implicit function theorem to solve for fixed $x,t$ the equation $\Pi (\ubar x-t\omega)=0$. 
More precisely we solve for  $u'$  the  equation 
\Be\label{eq:impl}  \Pi( \ubar x- t \Gamma(u', u'') )=0,\Ee
for $t$ near $1$, for $\ubar x$  near $\om^\circ$ and for $u''=(u_{r+1},\dots, u_{k} )$ near $0\in \bbR^{k-r}$. This is possible since $\ubar x- t\Gamma(u)|_{(\underline x,t,u)=(\om^\circ, 1,0)} =0$  and 
\[ \frac{\partial}{\partial u_i} \big( \ubar x-t\Gamma(u))|_{u=0} = -t \frac{\partial\Gamma}{\partial u_i}(0),\quad  i=1,\dots, r\] 
are linearly independent. 
As a consequence we solve \eqref{eq:impl} by $u' = h( \ubar x,t,u'')$  for $(\ubar x,t, u')$ near $(\om^\circ,1,0)$. Introducing coordinates $u'= h(\ubar x,t,u'')+\beta'$  for small $\beta'$ there is a small $\eps>0$,  with $\eps<\eps_0$, such that for $\delta\ll \eps$ and positive constants $c_1$, $c_2$ the 
measure of the set $\{u\in \bbR^{k}: c_1\delta/2\le |u'-h(\ubar x,t, u'') | \le c_1\delta, |u''|\le c_2\eps \}$ 
is bounded below by $c \eps^{k-r} \delta^r$, for 
$|\ubar x-\om^\circ|\le \eps$, $|t-1|\le \eps$. 
Consequently,
the surface measure of 
\Be\label{eq:Wdelta} \cW_\delta(\ubar x,t)= \big\{\omega\in \widetilde \Sigma:   \frac \delta 2<|\Pi (\ubar x-t\om )|\le \delta, |\om-\om^\circ|\le \eps \big\} \Ee
satisfies 
\Be\label{eq:lowerbd-surface} \inf \big\{ \sigma\big(\cW_\delta(\ubar x,t) \big) :\text{   $|\ubar x-\om^\circ|\le \eps$, $|t-1|\le \eps$}\big\} \, 
\ge \,c_\eps\delta^r. \Ee
 We  choose $\eps$ so small such that $\chi(\ubar x)\neq 0$ for $|\ubar  x-\omega^\circ|\le \eps$.

In what follows we fix $\eps>0$ such that  \[\eps \ll \tfrac 14  (1+|\la|+ \|J\| +|\omega^\circ|+\|J\||\om^\circ|) ^{-1} , \]  and  work with a parameter  $\delta\ll \eps$. 
Define
\begin{align*} R_{\delta}&=\{(\ubar y, y_{d+1}):  \frac \delta 2 \le |\Pi(\ubar y)|\le \delta,\, |y_{d+1}| \le \eps^{-1} \delta, \, 
|\ubar y|\le 1\}, 
\\ g_\delta&= \bbone_{R_\delta} . \end{align*} 
We test the maximal operator on $g_\delta$.

Let $m_*= \max \{ |\la^\intercal \ubar  x|: |\ubar x-\omega^\circ|\le \eps\}$. Then $m_*=|\la^\intercal \omega^\circ|+\eps|\la|$  which lies between $\eps|\la|$ and $(|\omega^\circ|+2\eps)|\la|$. 
Let 
\[V_\eps=\{ \ubar x\in \bbR^d: |\ubar x-\omega^\circ| \le \eps,\,   m_*/2\le |\la^\intercal \ubar x|\le m_*\}.\] 
Then $|V_\eps|>0.$  Let 
\[U_\eps=\Big \{(\ubar x, x_{d+1}): \ubar x\in V_\eps, \, 1-\frac\eps 2 < \frac{x_{d+1}}{|\la^\intercal \ubar x|} <1+\frac \eps 2\Big\}\,.\]  Observe 
\[ |U_\eps|=  \int_{ V_\eps} \eps|\la^\intercal \ubar x| d\ubar x\ge \frac{m_*\eps |V_\eps| }{2}>0.\]

We wish 
to derive a lower bound for $A_t f(x)$, for $x\in U_\eps$ and suitable $t=t_x\in (1-\eps, 1+\eps)$.
Observe that 
 \[x_{d+1}-t^2\la^\intercal \om - t \ubar x^\intercal J\om= x_{d+1} - t \lambda^\intercal \ubar x + 
 (t\la-J\ubar x)^\intercal (\ubar x-t\om) .
 \] 
It is natural to choose   \[ t_x = \frac{x_{d+1}} {\lambda^\intercal \ubar x}
\]  which for $x\in U_\eps$ lies  in $(1-\eps, 1+\eps)$ and gives 
 \begin{align*}  x_{d+1}-t_x^2\la^\intercal \om - t_x  \ubar x^\intercal J\om
 &=   
 (t_x\la-J\ubar x)^\intercal (\ubar x-t_x\om) 
 \\&=
  (t_x\la-J\ubar x)^\intercal \Pi(\ubar x-t_x\om) 
 . 
 \end{align*} 
 Note that  $|t_x\la-J\underline x|\le (1+\eps)|\la|+\|J\|(|\om^\circ|+\eps) \le \eps^{-1}$. 
For $\omega \in \cW_\delta(\ubar x, t_x)$  we get $\frac \delta 2\le |\Pi(\ubar x-t_x\om)|\le \delta$  and hence 
\[ |x_{d+1}-t_x^2\la^\intercal \om - t_x  \ubar x^\intercal J\om|
\le  \eps^{-1} |\Pi (\ubar x-t_x\om) | \le \eps^{-1}\delta.
\]
Also,  
$|\ubar x-t_x\om| \le |\ubar x-\om^\circ|+|\om^\circ(1-t_x)| +t_x|\om^\circ-\om| 
\le \eps+\eps|\om^\circ|+ 2\eps <1$. We have thus shown that 
$(\ubar x-t_x\om, x_{d+1}-t_x^2\la^\intercal \om - t_x \ubar x^\intercal J\om)\in R_\delta$ for $x\in U_\eps$ and $\om\in W_{\delta}(\ubar x, t_x)$ and consequently 
we obtain
\[ \fM_I g_\delta  (x)\ge A_{t_x} g_\delta (x) \ge c\sigma(W_\delta(\ubar x, t_x)) \ge c_\eps c\delta^r 
\text{ for $x\in U_\eps$.}\]
Hence $\|\fM_I g_\delta\|_{L^p} \gc_\eps \delta^r |U_\eps|^{1/p} $ and since $\|g_\delta\|_p=|R_\delta|^{1/p}\lc_\eps \delta^{\frac{r+1}{p}} $ we obtain
\[\frac{\|\fM_I g_\delta  \|_{L^p(U_\eps)}}{\|g_\delta\|_p }
\gc_\eps  \delta^{r- (r+1)/p}.\] Letting $\delta\to 0$ shows that $\fM_I$ cannot be $L^p$-bounded for $p<\frac{r+1}{r}$.

Now let $p(r)=\frac{r+1}{r}$. To disprove $L^{p(r)}$-boundedness  we define  for $N\gg\log_2\tfrac 1\eps$
\[F_N= \sum_{\log_2\tfrac 1\eps<j\le N} 2^{jr} g_{2^{-j}}.\] Since $\|g_{2^{-j}}\|_{p(r)}\lc c_\eps 2^{-jr}$   and the sets $R_{2^{-j}}$ are disjoint we  get
\[\|F_N\|_{p(r)}  \lc   N^{1/p(r)}.\]
On the other hand,  the above lower bounds show 
$A_{t_x}F_N(x) \gc N$ for $x\in U_\eps$ and thus  
\[\frac{\| \fM_I F_N\|_{p(r)}}{ \|F_N\|_{p(r)}}  \gc  N^{1/(r+1)}.\] Letting $N\to\infty$ we see that   $L^{p(r)}$-boundedness  fails. \qed

\section{Unboundedness of the maximal operator in the case $r=1$} \label{sec:r=1}
By the reduction in \S\ref{sec:reductiontom=1} we may assume $m=1$. If $r=1$ then $J=0$.  Let $\la$ 
be a nonzero vector in $\bbR^{d}$. We prove a more general result, replacing the surface  measure of $\Sigma$ by a more general finite (positive)  Borel measure $\mu$ and consider 
\[ \Gamma_t f( \underline x,x_{d+1})= \int f(\underline x-t\underline y, x_{d+1}-t^2 \la^\intercal \underline y) d\mu(\ubar y).\]
Note that for the  special case of  $\mu$ as in Theorem \ref{thm:main}  this is the case $J=0$ in \eqref{eq:Atg}. 
$\Gamma_t f$ is well-defined for  continuous functions $f$ with compact support, and $\Gamma_t f$ is then a continuous function.  For a compact interval   $I\subset (0,\infty) $  the maximal function 
 \[ M_I f(x)= \sup_{t\in I} |\Gamma_tf(x)| \]
is then well defined as a  Borel measurable function in $\bbR^{d+1}$. We show that no nontrivial boundedness property holds for $M_I$ if 
$\mu$ is not supported in the orthogonal complement of $\la$ in $\bbR^d$.  In particular this applies to prove part (ii) of Theorem \ref{thm:main}.

\begin{prop}   Assume that $\la\neq 0$ and that $\mu$ is  a finite positive Borel measure in $\bbR^d$, with the property that $\mu ((\la^\perp)^\complement)>0$.
Suppose that there is a positive constant $C$   such that the  inequality 
\[\|M_I f\|_p\le C\|f\|_p\] holds for all characteristic functions of open sets with compact closure. Then $p=\infty$.
\end{prop}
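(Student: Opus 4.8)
The plan is to reduce the statement to a two–dimensional Nikodym–set phenomenon, exploiting that a single dilation $t$ turns each average into an average over an \emph{off-center} segment of a line through the evaluation point.

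First I would normalize and reduce to two dimensions. After an orthogonal change of variables in $\fg_1=\bbR^d$ (which preserves $\la^\intercal\uy$, the structure of $\Gamma_t$ since here $J=0$, and all $L^p$ norms) I may assume $\la=|\la|e_1$, so that $\la^\intercal\uy=|\la|\,y_1$ depends only on the first coordinate of $\uy$. Since $\mu((\la^\perp)^\complement)>0$ we have $\mu(\{y_1\neq0\})>0$, so I may fix a bounded Borel set $B\subset\{s_0\le y_1\le s_0+\eta\}$ with $\mu(B)>0$ and $0<s_0<s_0+\eta$ (the case of negative $y_1$ being symmetric). I test $M_I$ on products $f(\ux,x_{d+1})=\phi(x_1,x_{d+1})\,\psi(x_2,\dots,x_d)$ with $\phi,\psi\ge0$ continuous of compact support, where $\psi\equiv1$ on a ball large enough that $\psi$ equals $1$ at all arguments occurring for $t\in I$, $\uy\in B$ and $\ux$ in a fixed bounded set. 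Discarding the mass of $\mu$ outside $B$ (legitimate since $f\ge0$) gives, for those $\ux$,
\[\Gamma_t f(\ux,x_{d+1})\ \ge\ \int_{\bbR}\phi\big(x_1-ts,\,x_{d+1}-t^2c\,s\big)\,d\rho(s),\qquad c=|\la|,\]
where $\rho$ is the push-forward of $\mu|_B$ under $\uy\mapsto y_1$, a positive measure with $\rho([s_0,s_0+\eta])=\mu(B)>0$. Because the variables $x_2,\dots,x_d$ contribute only fixed constant factors to both $L^p$ norms, it suffices to show that the two–dimensional operator $\phi\mapsto\sup_{t\in I}\big|\int\phi(x_1-ts,x_{d+1}-t^2cs)\,d\rho(s)\big|$ is bounded on $L^p(\bbR^2)$ only for $p=\infty$.

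Second I would record the geometric observation that for a \emph{fixed} $t$ the set over which we average,
\[\{(x_1-ts,\,x_{d+1}-t^2cs):s\in[s_0,s_0+\eta]\},\]
is a genuine straight segment lying on the line through $(x_1,x_{d+1})$ of slope $t^2c/t=tc$; and since $s_0>0$ it is the sub-segment at parameter distance $s\in[s_0,s_0+\eta]$, which does \emph{not} contain the point $(x_1,x_{d+1})$ itself. The parabolic curvature of $t\mapsto(t,t^2c)$ is irrelevant, since only one value $t=t_x$ is used at each point. As $t$ ranges over $I$ the slope $tc$ fills the nondegenerate interval $J=\{tc:t\in I\}$. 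Hence the maximal average at a point is large as soon as some line through it, with slope in $J$, carries a prescribed off-center segment lying inside the set where $\phi$ is large; this is exactly a Nikodym configuration.

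Third I would invoke a Nikodym set adapted to the arc $J$: there is a large square $Q$ and a set $N\subset Q$ with $|Q\setminus N|=0$ such that every $x\in N$ lies on a line $\ell_x$ with slope $m_x\in J$ meeting $N$ only at $x$, so that $(\ell_x\setminus\{x\})\cap Q\subset Q\setminus N$ (cf. \cite{Nikodym, Guzman1}; confining the directions to the prescribed nondegenerate arc $J$ comes from the same Perron-tree construction applied to a thin triangle, after rescaling slopes by a linear map if convenient). Fix a compact $E\subset N$ with $|E|>0$, chosen so that $Q$ is large enough that for each $x\in E$ the off-center segment $\{x-s(t_x,t_x^2c):s\in[s_0,s_0+\eta]\}$, with $t_x=m_x/c\in I$, stays in $Q\setminus N$. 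For $\eps>0$ pick an open $O_\eps\supset Q\setminus N$ with $|O_\eps|<\eps$ and a continuous $0\le\phi_\eps\le1$ equal to $1$ on $Q\setminus N$ with $\supp\phi_\eps\subset O_\eps$. Then the whole off-center segment lies where $\phi_\eps=1$, so $\Gamma_{t_x}\phi_\eps(x)\ge\rho([s_0,s_0+\eta])=\mu(B)$ and $M_I\phi_\eps\ge\mu(B)$ on $E$, while $\|\phi_\eps\|_p\le|O_\eps|^{1/p}<\eps^{1/p}$. This forces the operator norm to exceed $\mu(B)\,|E|^{1/p}\eps^{-1/p}\to\infty$ as $\eps\to0$ for every $p<\infty$. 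Lifting back to $\bbR^{d+1}$ through $\phi_\eps\otimes\psi$ completes the argument.

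The main obstacle is the content of the second and third steps: recognizing that, once one dilation is selected per point, the parabolic averages become averages over off-center segments of lines through that point whose slopes fill a nondegenerate arc, and then supplying a Nikodym set with directions confined to that arc. The attendant bookkeeping—that it is an off-center segment, rather than one centered at the point, that must be trapped in the small set—is handled automatically by the Nikodym property, since the entire punctured line lies in the complement of $N$. The reductions in the first step are routine.
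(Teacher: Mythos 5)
Your proposal follows essentially the same route as the paper's proof: reduce after a rotation to a planar problem, observe that for a \emph{fixed} dilation $t$ the average runs over an off-center (punctured-line) segment through the evaluation point whose slope fills a nondegenerate interval as $t$ varies over $I$, and then test the maximal operator on a continuous approximation of the indicator of a small open neighborhood of the Nikodym null set, getting a lower bound $\gtrsim \mu(B)$ on a set of positive measure against an $L^p$ norm $\lesssim \eps^{1/p}$. The reduction to two dimensions via product test functions is equivalent to the paper's device of taking a cylinder $\cE$ over the planar null set.

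The one substantive weak point is your third step: you posit a Nikodym set whose exceptional lines have slopes confined to the prescribed interval $J$, attributing this to ``the same Perron-tree construction applied to a thin triangle, after rescaling slopes by a linear map if convenient.'' The theorem you cite (de Guzm\'an, Theorem 3.4) gives no control whatsoever on the direction function $w\mapsto\alpha(w)$, and the linear-rescaling remark cannot repair this on its own: an invertible linear map of the plane induces a bijection of the set of directions, so it can never compress lines of unrestricted direction into a proper arc. The statement you need is true, but it requires an argument, and the paper supplies exactly the missing one: since $w\mapsto\alpha(w)$ is measurable, partition $[0,\pi)$ into $4N$ arcs and pigeonhole to extract a positive-measure set of points whose lines have directions in a single arc of width $\pi/(4N)$; rotate so that this arc sits inside $[(1-\tfrac 1N)\tfrac\pi4,\tfrac\pi4)$; and, using parabolic scaling together with the monotonicity of the maximal operator in the interval, shrink $I$ to $[\tan(\tfrac\pi4(1-\tfrac1N)),1]$ so that the available slopes exactly cover that arc. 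With this substitution (pigeonhole plus rotation plus shrinking $I$, in place of the asserted direction-restricted construction), your argument is complete and coincides with the paper's.
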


\begin{proof} 
In what follows we will work with nonnegative functions throughout, so we may reduce the length of the interval for lower bounds. By parabolic scaling (\S\ref{sec:parabolicscaling}) and possible shrinking the $t$-interval  we may assume 
\[ \text{ $I=[a,1]$ where $a= \tan (\tfrac \pi 4(1-\tfrac 1N))$,}\]  for some $N\ge 6$. 

Let $R$  be a rotation in $\bbR^d$ such that $Re_d=\la/\|\la\|$. Let $\mu_R=\mu(R\cdot)$, formally defined by  $\int u(\ubar y) d\mu_R(\ubar y)= \int u(R^{-1} \ubar z) d\mu(\ubar z)$ for test functions $u$.
Then if 
\[\cA_t f(x):= \int f(\ubar  x-t\ubar  y, x_{d+1}-t^2 e_d^\intercal \ubar  y) d\mu_R (\ubar  y)\]
we have 
\begin{multline*}\Gamma_t  f(\ubar  x, x_{d+1})= \cA_t F_{R,\la } ( R^{-1}   \ubar  x, \|\la \|^{-1} x_{d+1} ) \\ \text{ with } \quad F_{R,\la}(\ubar  w, w_{d+1})= f(R \ubar  w, \|\la \|w_{d+1}). \end{multline*}
We also note that the assumption  of $\supp (\mu)$  not contained in $\la^\perp$  is equivalent with $\supp(\mu_R)$ not contained in $e_d^\perp$. 
Hence  $f\mapsto \sup_{ t\in[a,1]}|\Ga_t f|$ is bounded on $L^p$ if and only if   $f\mapsto \sup_{t\in[a,1]} |\cA_t f |$ is bounded on $L^p$.

{\it Proof of unboundedness for $p<\infty$.} We split variables in $\bbR^d$ as 
 $\ubar  y=(y',y_d)$.
Since the measure $\mu_R$ is not supported in $e_d^\perp$ there exists a bounded open subset $W\subset\{\ubar  y\in \bbR^d: y_d\neq 0\}$ such that $\mu_R(W)>c>0$.  Let $L>0$ be such that $|y'|\le L$ for all $y=(y',y_d)\in W$. When deriving lower bounds on nonnegative functions we will  replace $\mu_R$ by $\chi \mu_R$ where $\chi$ is continuous and compactly supported in $W$ and such  that
\[\int \chi(\ubar  w) d\mu_R(\ubar w)>c/2.\]

We use the Nikodym set in $\bbR^2$ to construct our counterexample. The original complicated construction by Nikodym appeared in \cite{Nikodym}. Subsequent   constructions are simpler and based on the Perron tree constructions used for the Kakeya set, see de Guzm\'an's book \cite{Guzman1}.
According to \cite[Theorem 3.4]{Guzman1} there exists a measurable set  $F_0\subset [-1,1]^2$  of full Lebesgue measure $|F_0|=4$ and  a Lebesgue null set $E_0\subset \bbR^2$  such that for each $w\in F_0$ there is a straight line \[l(w)= w+ 
\{r(\cos(\alpha(w)),\sin(\alpha(w))): r\in \bbR\} \] with   $l(w)\setminus \{w\}\subset E_0$; here   $\alpha(w)\in [0,\pi)$ and $w\mapsto \alpha(w)$ is a measurable function.
Let $\widetilde F=\{w\in F_0:|w|\le 1\}$.

By pigeonholing  there exists $k\in\{0,1,\dots,4N-1\}$ so that 
\[  \widetilde F_{k}=\{w\in \wt F: \alpha(w)\in [\tfrac{k\pi}{4N}, \tfrac{(k+1)\pi}{4N})\}\] satisfies 
\[|\widetilde F_{k}|\ge |\widetilde F|/4N=\pi/4N .\]
Let $\rho_\beta $ be the  planar rotation $\begin{pmatrix} \cos\beta &-\sin\beta\\ \sin\beta&\cos\beta\end{pmatrix}$ 
with $\beta=-\tfrac{k+1}{4N}\pi +\frac\pi 4$.
Let $F= \rho_\beta \widetilde F_{k} $ and $E= \rho_\beta E_0$.

Then $|F|\ge \pi/4N$ and $E$ is a Lebesgue null set. By construction  there is, for every $w\in F$, a line
$\ell(w)= \{(r\cos \ga(w), r\sin \ga(w)) :r\in \bbR\}$ such that $\ga(w)\in [(1-\frac 1N) \frac{\pi}{4}, \frac\pi 4)$ and $\ell(w) \setminus\{w\} \subset E$.

For each $w\in F$ let $s(w)$ be the slope of the line $\ell(w)$ and then 
\[ (s(w), s(w)^2)= s(w) \sqrt{1+s(w)^2} (\cos\gamma(w),\sin\gamma(w)),\]
and 
$\gamma(w)= \arctan s(w)$. 
In particular 
\[\ell(w)\setminus \{w\}=w+\{ (r s(w), rs (w)^2): r\neq 0\}.\]

We return to the task of deriving 
lower bounds for the maximal function $\sup_{t\in [a,1]}|\cA_t f|$. 
 Let 
\[\cE= \{(y',y_d,y_{d+1})\in \bbR^{d+1}:  \,|y'|\le 2+L,\, (y_d,y_{d+1})\in E \}.
\]
$\cE$ is a Lebesgue null set and thus for small $\eps>0$ there is an open subset $\cV_\eps$ 
of measure $<\eps$ with $\cV_\eps\supset\cE.$ Let $f_\eps=\bbone_{\cV_\eps}$, then  
clearly $\|f_\eps\|_p\le\eps^{1/p}$.

Let
\[\cF=\{(x',x_d,x_{d+1})\in \bbR^{d+1}: |x'|\le 1,\, (x_d, x_{d+1})\in F \}\] so that $\cF$ has positive Lebesgue measure. 
Let
\[t_x:=s(x_d,x_{d+1}).\]
For $x\in \cF$ we have
\[\cA_{t_x} f_\eps(x) \ge  \int f_\eps( x'-t_x y', x_d-t_x y_d,  x_{d+1} -t_x^2 y_d) \chi(y',y_d) d\mu_R(\ubar  y).
\]
On the support of $\chi$ we have $y_d\neq 0$. The vector 
 \[\big( x_d-t_x y_d, 
x_{d+1}-t_x^2 y_d\big) \]
thus 
belongs to $\ell(x_d,x_{d+1})\setminus \{(x_d,x_{d+1})\}\subset  E $ and since $|x'-t_x y'|\le 2+L$  we get
\[ \sup_{a\le t\le 1}\cA_t f_\eps(x) \ge \cA_{t_x}  f_\eps(x) \ge \int \chi(y',y_d) d\mu_R(\ubar  y) >c/2
\] 
for all $\eps>0$. 
If $M_I$ were   bounded on $L^p$ there would be a constant $C>0$ such that
$0<|\cF|^{1/p} c/2 \le C\eps^{1/p}$  which for sufficiently small $\eps>0$ cannot hold  if  $p<\infty$.
\end{proof}

\newpage
\end{document}